\numberwithin{equation}{section}
\numberwithin{figure}{section}
\theoremstyle{plain}
\newtheorem{thm}{\protect\theoremname}
  \theoremstyle{plain}
  \theoremstyle{remark}
  \theoremstyle{plain}
  \newtheorem{cor}[thm]{\protect\corollaryname}
  \providecommand{\corollaryname}{Corollary}
  \providecommand{\lemmaname}{Lemma}
  \providecommand{\remarkname}{Remark}
\providecommand{\theoremname}{Theorem}
\newcommand{\E}{\mathbb{E}}
\newcommand{\A}{\mathcal{A}}
\begin{document}

\title{Refinements of the Bell and Stirling numbers}

\author{Tanay Wakhare$^\ast$}
\address{$^\ast$~University of Maryland, College Park, MD 20742, USA}
\email{twakhare@gmail.com}

\begin{abstract}We introduce new refinements of the Bell, factorial, and unsigned Stirling numbers of the first and second kind that unite the derangement, involution, associated factorial, associated Bell, incomplete Stirling, restricted factorial, restricted Bell, and $r$-derangement numbers (and probably more!). By combining methods from analytic combinatorics, umbral calculus, and probability theory, we derive several recurrence relations and closed form expressions for these numbers. By specializing our results to the classical case, we recover explicit formulae for the Bell and Stirling numbers as sums over compositions. 
\end{abstract}

\maketitle

\section{Introduction}
The Bell and Stirling numbers have been studied for over a century because of their importance to many combinatorial problems. They frequently arise in enumeration problems in combinatorics, and also satisfy many complex identities and inter-relations \cite{Comtet,Mansour}. In this paper, we present refinements of the Bell and Stirling numbers that preserve many essential structural properties of their classical counterparts. In particular, they satisfy many of the same inter-relations and recurrence relations because the associated generating functions have a simple form which enables us to follow classical proofs of many identities. They also unite several previously disparate generalizations of the Stirling and Bell numbers. 

We use a variety of methods to then explore their properties; we begin with an analytic combinatoric construction of their generating function, followed by umbral and generating function methods to derive identities for these numbers. Finally, an explicit probabilistic representation allows us to construct some new and extremely interesting expressions for the Bell and Stirling numbers as sums over compositions.

Let $S$ be a nonempty (and possibly infinite) set of indices. We consider \textbf{generalized Bell numbers} $B_{n,S}$, defined as the number of ways to partition $n$ into blocks, where each block has a size which is found in $S$. We also consider the \textbf{generalized factorial numbers} $A_{n,S}$, which count the number of permutations of $n$ labeled elements into cycles, such that each cycle has a number of elements that belongs to the index set $S$. For example, with $S=\{1,3\}$, $A_{4,S} = 9$ and $B_{4,S} = 5$.  The relevant partitions for $B_{4,S}$ are 
\begin{itemize}
\item
\{\{1\},\{2\},\{3\},\{4\}\}, 
\item
\{\{1,2,3\},\{4\}\}, 
\item
\{\{1,2,4\},\{3\}\},
\item
\{\{1,3,4\},\{2\}\},
\item
 \{\{2,3,4\},\{1\}\},
\end{itemize}
and the relevant permutations for $A_{4,S}$ are
\begin{itemize}
\item
(1)(2)(3)(4)
\item
(1)(342)
\item
(1)(432)
\item
(2)(341)
\item
(2)(431)
\item
(3)(241)
\item
(3)(421)
\item
(4)(231)
\item
(4)(321)
\end{itemize}

We introduce $$\begin{bmatrix}n \\k \end{bmatrix}_S,$$ the \textbf{generalized (unsigned) Stirling numbers of the first kind}, which count the number of permutations of $n$ elements with $k$ cycles, such that each cycle has a cardinality found in $S$. We also introduce $$\begin{Bmatrix}n \\k \end{Bmatrix}_S,$$ the \textbf{generalized Stirling numbers of the second kind}, which count the number of partitions of $n$ into $k$ boxes, such that each box has a cardinality found in $S$. These have two free variables, and contain more combinatorial informations than the Bell and factorial numbers.

We also need to define initial values, since setting the following values allows us to concisely state consistent generating function identities. These definitions are consistent with initial values for the classical case \cite{Comtet}. For $n\geq1$, we set $$\begin{bmatrix}n \\0 \end{bmatrix}_S=\begin{bmatrix}0 \\k \end{bmatrix}_S=\begin{bmatrix}n \\0 \end{bmatrix}_S=\begin{Bmatrix}0 \\k \end{Bmatrix}_S=0. $$ We also set $$ \begin{bmatrix}0 \\0 \end{bmatrix}_S=\begin{bmatrix}0 \\0 \end{bmatrix}_S=1,$$ and note that from the combinatorial definition $$ \begin{bmatrix}n \\k \end{bmatrix}_S=\begin{bmatrix}n \\k \end{bmatrix}_S=0$$ for $k>n$.

Directly from the combinatorial definitions, we see that we have the equations $$B_{n,S} = \sum_{k=0}^n \begin{Bmatrix}n \\k \end{Bmatrix}_S $$ and $$A_{n,S} = \sum_{k=0}^n \begin{bmatrix}n \\k \end{bmatrix}_S. $$ Therefore, we set the initial values  $$A_{0,S}=B_{0,S}=1. $$ 

These numbers have been studied for many special values of the index set $S$. The case $S=\mathbb{Z}_{\geq 1}=\{1,2,\ldots\}$ yields the classical Bell, Stirling, and factorial numbers since these do not have any restrictions on cycle length or box size. The cases $S=\{1,2,\ldots,k\}$ and $S=\{k,k+1,\ldots\} $ are referred to as $B_{n,\leq k}$, $B_{n,\geq k}$, $A_{n,\leq k}$, and  $A_{n,\geq k}$, and have been extensively studied several times \cite{Miksa, Moll1}. In another direction, $S=\{1,2\}$ means that $B_{n,S}=A_{n,S}$ yield the involution numbers \cite{Tewodros} and $S=\{2,3,4,\ldots\}$ leads to the derangement numbers \cite{Bona}. Extending our study of Stirling numbers from these particular index sets $S$ to the general case allows us to prove general theorems that apply to every special case. It also leads to identities that are very non-intuitive combinatorially, guiding future work.


\begin{table}[]
\centering
\caption{Specializations}
\label{my-label}
\scalebox{1.0}{

\begin{tabular}{|l|l|}
\hline
\textbf{Set}                                     & \textbf{Classical object} \\ \hline
$\{1,2,3,\ldots\}$ & Bell and Stirling, $n!$                                    \\ 
$ \{k,k+1,\ldots\}$                     & associated Bell, associated Stirling, associated factorial ($r$-derangement) \\ 
$ \{1,2,3,\ldots ,k\}$                   & restricted Bell, restricted Stirling, restricted factorial \\ 
$\{1,2\}$                               & involution numbers                                         \\ 
$ \{2,3,4,\ldots\}$                     & derangement numbers                                        \\ \hline
\end{tabular}

}
\end{table}

\section{Symbolic Methods}
Here we present the basic ideas of \textit{analytic combinatorics}, which trivializes the proof of our generating functions. For further details of the theory described in this section, refer to Flajolet and Sedgewick \cite{Flajolet}. The main idea of this symbolic method is to write our combinatorial constructions as the composition of several basic operations on a single element. We can then algorithmically read off a generating function for our combinatorial quantity from this sequence of operations.

We consider exponential generating functions (EGFs), which naturally correspond to labeled objects. Ordinary generating functions correspond to unlabeled objects. Some of the most basic constructions \textit{for labeled elements} (there are subtle differences for unlabeled objects) are $SUM$, $PROD$, $SEQ$, $SET$, and $CYC$. Given that $A(z)=\sum_{n}a_n \frac{z^n}{n!}$ and $B(z) = \sum_n b_n \frac{z^n}{n!}$ are the EGFs of $\{a_n\}$ and $\{b_n\}$, the operation $SUM$ produces an EGF for $\{a_n+b_n\}$ - the number of elements of size $n$, following pointwise addition of $A$ and $B$. This is trivially $A(z)+B(z) = \sum_n (a_n+b_n)\frac{z^n}{n!},$ but other operations allow us to construct very nontrivial generating functions.

The remaining operations are:
\begin{itemize}
\item
$PROD(\mathcal{B},\mathcal{C})$, which gives an EGF for the cardinalities of the ``labeled product" of $\{a_n\}$ and $\{b_n\}$, which consists of the set of ordered pairs $\{(a_n,b_n)\}$ after an order consistent relabeling. This is given by $A(z) \times B(z)$
\item
$SEQ(\mathcal{B})$, which corresponds to an EGF for the number of sequences of a given size with parts in $\mathcal{B}$, is given by $\frac{1}{1-B(z)}$. $SEQ_k(\mathcal{B})$, corresponding to the size of all $k$-element sequences, is given by ${B(z)^k}$.
\item
$SET(\mathcal{B})$ corresponds to forming all sequences, taken modulo an equivalence relation identifying all sequences that are permutations of each other. This is given by $\exp\left(B(z)\right)$. $SET_k(\mathcal{B})$, the set of all $k$-sequences modulo this same equivalence relation, is given by $\frac{B(z)^k}{k!}.$
\item
$CYC(\mathcal{B})$ corresponds to $SEQ(\mathcal{B})$, taken modulo an equivalence relation identifying sequences whose elements are \textit{cyclic} permutations of each other, and is given by $\log \frac{1}{1-B(z)}$. $CYC_k(\mathcal{B})$, the set of $k$-sequences modulo this same equivalence relation, is given by $\frac{B(z)^k}{k}$. 
\end{itemize}

Using just these few operations (there are more!) allows us to write
\begin{thm}\label{main}
We have the following constructions:
$$\sum_{n=0}^\infty \begin{bmatrix}n \\k \end{bmatrix}_S \frac{z^n}{n!}  = SET_{k} \left( \sum_{s \in S}CYC_{s}(\mathcal{Z}) \right)=  \frac{1}{k!} \left(\sum_{s \in S} \frac{z^{s}}{s} \right)^k,$$
$$\sum_{n=0}^\infty \begin{Bmatrix}n \\k \end{Bmatrix}_S \frac{z^n}{n!}  = SET_{k} \left( \sum_{s \in S}SET_{s}(\mathcal{Z}) \right)= \frac{1}{k!} \left(\sum_{s \in S} \frac{z^{s}}{s!} \right)^k,$$
$$\sum_{n=0}^\infty A_{n,S}\frac{z^n}{n!} = SET\left( \sum_{s\in S} CYC_{s}(\mathcal{Z})  \right) =\exp \left(\sum_{s \in S} \frac{z^{s}}{s} \right),$$
$$\sum_{n=0}^\infty B_{n,S}\frac{z^n}{n!} = SET\left( \sum_{s\in S} SET_{s}(\mathcal{Z})  \right)=\exp \left(\sum_{s \in S} \frac{z^{s}}{s!} \right),$$
where $\mathcal{Z}$ is an ``atomic class" of a single element of size one.
\end{thm}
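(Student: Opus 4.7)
The plan is a direct application of the symbolic method just introduced. Each combinatorial structure in question decomposes into simple atomic pieces whose EGFs are prescribed by the Flajolet--Sedgewick dictionary listed above, so the entire proof amounts to recognizing the correct composition of operators.

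I would begin with $\begin{bmatrix}n\\k\end{bmatrix}_S$. The combinatorial definition says that a permutation counted here is an unordered collection of $k$ disjoint cycles, each of length in $S$. A single cycle of fixed length $s$ on labeled atoms is precisely the class $CYC_s(\mathcal{Z})$, whose EGF is $z^s/s$. Applying $SUM$ over admissible lengths $s \in S$ yields the class ``one allowed cycle,'' with EGF $\sum_{s\in S} z^s/s$. Composing with $SET_k$ then assembles $k$ copies of this class into an unordered family, which by the dictionary produces the EGF $\frac{1}{k!}\bigl(\sum_{s\in S} z^s/s\bigr)^k$, establishing the first identity.

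I would then repeat the same dissection for the remaining three identities. For $\begin{Bmatrix}n\\k\end{Bmatrix}_S$, the atomic block of size $s$ is $SET_s(\mathcal{Z})$ with EGF $z^s/s!$; summing over $s \in S$ and composing with $SET_k$ gives the second identity. For $A_{n,S}$ and $B_{n,S}$, I would simply drop the restriction on the number of components and use the full $SET$ operator in place of $SET_k$, which the dictionary sends to $\exp$. As a cross-check, summing the closed forms for the two-index Stirling variants over $k\geq 0$ via $e^x = \sum_k x^k/k!$ reproduces the exponential expressions, matching $A_{n,S} = \sum_k \begin{bmatrix}n\\k\end{bmatrix}_S$ and $B_{n,S} = \sum_k \begin{Bmatrix}n\\k\end{Bmatrix}_S$.

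I do not anticipate a genuine obstacle: once the symbolic dictionary is granted, the theorem is essentially a translation exercise from the combinatorial definitions into the language of $CYC$, $SET$, and $SUM$. The one point I would verify with care is consistency with the boundary conventions declared in the introduction, namely that the constant terms of each series match $\begin{bmatrix}0\\0\end{bmatrix}_S = \begin{Bmatrix}0\\0\end{Bmatrix}_S = A_{0,S} = B_{0,S} = 1$ and that the series correctly vanish in the other degenerate cases. These follow by inspection of the formal expansions, so the proof is complete modulo this routine check.
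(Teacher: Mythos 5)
Your proposal is correct and follows essentially the same route as the paper: translate the combinatorial definitions into the symbolic dictionary by summing the inner $CYC_s$ (resp.\ $SET_s$) classes over $s\in S$ and wrapping with $SET_k$ or $SET$, then read off the EGFs. The added cross-check via $e^x=\sum_k x^k/k!$ and the boundary-value verification are sensible touches but do not change the argument.
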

\begin{proof}We give an example for how to translate this into a generating function; the given EGFs follow from interpreting $\mathcal{Z}$ as the variable $z$, and then formulaically applying the rules for symbolic constructions.

We begin with the combinatorial definition of $\begin{bmatrix}n \\k \end{bmatrix}_S$, the number of permutations of $n$ labeled elements with $k$ cycles, such that each cycle has a cardinality found in $S$. We first form the sum $ \sum_{s \in S}CYC_{s}(\mathcal{Z})$. Since each $CYC_{s}$ operator is invariant under a cyclic permutation of its sequences, it enumerates the number of cycles of length $s$. The sum over $s \in S$ then corresponds to the fact that we allow a cycle to have any cardinality found in our index $S$. $SET_k$ then composes $k$ cycles, while ignoring the order in which the cycles are permuted. The coefficient of $z^n$ in the corresponding EGF will therefore count the number of ways $n$ elements can be decomposed as permutations with cycle lengths in $S$ -- which is precisely $\begin{bmatrix}n \\k \end{bmatrix}_S$. 


The analysis for $\begin{Bmatrix}n \\k \end{Bmatrix}_S$ roughly follows the one before. However, since the {generalized Stirling numbers of the second kind} count the number of partitions of $n$ into $k$ \textit{boxes} instead of $k$ \textit{cycles}, we apply inner $SET$ operators instead of $CYC$ operators.

When considering the EGF of $B_{n,S}$ we repeat the analysis for $\begin{bmatrix}n \\k \end{bmatrix}_S$, but now we apply an outer $SET$ operator instead of $SET_k$, since we care about an arbitrary number of cycles instead of just decompositions into $k$ cycles. 
\end{proof}

When we take $S=\{1,2,3,\ldots\}$, we  note that $\sum_{s \in S}\frac{z^{s}}{s!} = e^x-1$ and we recover the EGF for the classical Bell numbers $e^{e^x-1}$. In addition, taking $S=\{1,2,\ldots,m\}$ and $\{m,m+1,m+2, \ldots\}$ recover the generating functions $\exp\left(\sum_{i=1}^m \frac{z^{i}}{i!}\right)$,  $\exp\left(e^x-\sum_{i=0}^m \frac{z^{i}}{i!}\right)$, and so on \cite[Thm. 4.2, Thm. 5.2, (4.3)]{Moll1}.

The last two generating functions show that $B_{n,S}$ and $A_{n,S}$ are special cases of the \textit{complete Bell polynomials}, which are defined by the generating function $$ \exp\left(\sum_{n=0}^\infty a_n \frac{t^n}{n!}\right)=\sum_{n=0}^\infty B_n(a_0,\ldots,a_n)\frac{t^n}{n!}.$$ By comparing generating functions, we recover $B_{n,S}$ through the choice $$a_n = \begin{cases}
1, &n \in S; \\
0, &n \not\in S.
\end{cases}$$
Analogously, we recover $A_{n,S}$ through the choice $$a_n = \begin{cases}
(n-1)!, &n \in S; \\
0, &n \not\in S.
\end{cases}$$


\section{Composition sums}
Since all of the generating functions in Theorem \ref{main} can be easily represented as the composition of two functions, we can apply the \textbf{Fa\`a di Bruno formula} for higher order derivatives to derive new formulae for $A_{n,S}$, $B_{n,S}$, and the generalized Stirling numbers. 

Throughout this paper, we will consider \textbf{compositions} of an integer $n$. A composition $\pi$ is an \textit{ordered} tuple of positive integers called \textbf{parts} that add up to $n$ -- therefore, $(1,1,2)$, $(1,2,1)$, and $(2,1,1)$ all represent different compositions of $4$. We denote by $\mathcal{C}$ the set of all compositions, by $\mathcal{C}_n$ the set of all compositions of $n$, and by $|\thinspace\thinspace \cdot\thinspace \thinspace |$ the number of parts in a composition. For example, $|(1,1,2)|=3$. For the composition $\pi\in \mathcal{C}_n$, we denote the set of its parts by $\{\pi_i\}$. We also use the multi-index notation $g_\pi:=\prod_{\pi_i \in \pi}g_{\pi_i}$ and $\pi!=\prod_{\pi_i \in \pi}{\pi_i!}$, which enables us to state a convenient corollary of the classical {Fa\`a di Bruno formula}:

\begin{thm}\cite[Thm. 9]{Wakhare}\label{faa}
Let $g(z) = \sum_{n \geq 1} g_n z^n $ and $f(z) = \sum_{n\geq 0 }f_n z^n$. We then have the generating function identity
\begin{equation}
\label{eq:f(g(z))}
f(g(z)) = f_0 + \sum_{n \geq 1} z^n \sum_{\pi \in \mathcal{C}_n} f_{\vert \pi \vert} g_{\pi}.
\end{equation}
\end{thm}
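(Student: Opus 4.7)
The plan is to prove this identity by a direct expansion of formal power series, bypassing the classical Fa\`a di Bruno formula in favor of a more elementary multinomial argument. The crucial structural hypothesis is that $g(z)$ has no constant term, which ensures $f(g(z))$ is a well-defined formal power series (each coefficient of $z^n$ being a finite sum in $f_0,\ldots,f_n$ and $g_1,\ldots,g_n$) and that the ordered tuples arising from multinomial expansion correspond exactly to compositions.

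First, I would use the series expansion of $f$ to write
$$f(g(z)) = f_0 + \sum_{k \ge 1} f_k \, g(z)^k,$$
and then expand the $k$-th power as a multinomial sum to obtain
$$g(z)^k = \sum_{m_1,\ldots,m_k \ge 1} g_{m_1} g_{m_2} \cdots g_{m_k} \, z^{m_1 + \cdots + m_k}.$$
The key combinatorial observation is that ordered $k$-tuples $(m_1,\ldots,m_k)$ of positive integers summing to $n$ are, by definition, precisely the compositions $\pi \in \mathcal{C}_n$ with $|\pi| = k$. Under this identification the product $g_{m_1}\cdots g_{m_k}$ is exactly the multi-index $g_\pi$, so grouping terms by the total $n = m_1 + \cdots + m_k$ yields
$$g(z)^k = \sum_{n \ge k} z^n \sum_{\substack{\pi \in \mathcal{C}_n \\ |\pi| = k}} g_\pi.$$

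Finally, I would substitute this back into the expansion of $f(g(z))$, interchange the sums over $k$ and $n$ (a purely formal manipulation, since for each fixed $n$ only compositions with $|\pi| \le n$ contribute), and recognize that letting $k = |\pi|$ vary absorbs the constraint $|\pi| = k$ and converts the inner sum into a sum over \emph{all} $\pi \in \mathcal{C}_n$ with weight $f_{|\pi|}$. This produces exactly
$$f(g(z)) = f_0 + \sum_{n \ge 1} z^n \sum_{\pi \in \mathcal{C}_n} f_{|\pi|} \, g_\pi.$$

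There is no genuinely hard step here; the only bookkeeping is verifying that the combinatorial indexing matches, and the absence of a constant term in $g$ is what keeps the argument formally clean. An alternative route would be to start from the classical Fa\`a di Bruno formula over set partitions and collapse symmetries by absorbing multinomial coefficients, but the direct multinomial expansion above is both shorter and more transparent, and it makes clear why the compositional form of the statement is more natural than the partition form in this setting.
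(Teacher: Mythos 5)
Your argument is correct and complete. The paper itself gives no proof of this statement --- it is quoted from an external reference (Wakhare--Vignat, Theorem 9) --- so there is nothing in the paper to compare against line by line, but your direct multinomial expansion is exactly the standard derivation one would expect that source to use: expand $f$ as a power series, expand $g(z)^k$ multinomially, identify ordered $k$-tuples of positive integers summing to $n$ with compositions $\pi \in \mathcal{C}_n$ having $\vert\pi\vert = k$, and swap the order of summation (legitimate since for fixed $n$ only $1 \le k \le n$ contributes). You also correctly isolate the role of the hypothesis $g_0 = 0$: it guarantees both that the composition is a well-defined formal power series and that the multinomial indices are genuine compositions (all parts positive), which is what makes the composition-sum form cleaner than the set-partition form of Fa\`a di Bruno. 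No gaps.
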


\begin{thm}\label{compthm}
We have the following composition sum identities:
$$ \begin{bmatrix}n \\k \end{bmatrix}_S \frac{1}{n!} = \sum_{\substack{   \pi \in \mathcal{C}_n  \\ \pi_i \in S \\ |\pi|=k }} \frac{1}{\vert \pi \vert! \prod_{\pi_i \in \pi}{\pi_i}},$$
$$ \begin{Bmatrix}n \\k \end{Bmatrix}_S \frac{1}{n!} = \sum_{\substack{   \pi \in \mathcal{C}_n  \\ \pi_i \in S  \\ |\pi|=k}} \frac{1}{\vert \pi \vert! \pi!},$$
$$\frac{A_{n,S}}{n!} = \sum_{\substack{   \pi \in \mathcal{C}_n  \\ \pi_i \in S  }} \frac{1}{\vert \pi \vert! \prod_{\pi_i \in \pi}{\pi_i}},$$
$$\frac{B_{n,S}}{n!} = \sum_{\substack{   \pi \in \mathcal{C}_n  \\ \pi_i \in S  }} \frac{1}{\vert \pi \vert! \pi!}.$$
\end{thm}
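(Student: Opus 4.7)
The plan is to derive all four identities as immediate specializations of Theorem \ref{faa} applied to the EGFs supplied by Theorem \ref{main}. Each of those four EGFs has the shape $f(g(z))$ with
\[
g(z) \;=\; \sum_{s\in S} g_s\, z^s, \qquad g_s = \begin{cases} 1/s & \text{(first kind / } A_{n,S}\text{)}\\ 1/s! & \text{(second kind / } B_{n,S}\text{)}\end{cases}
\]
and $g_s=0$ for $s\notin S$. Because $0\notin S$ in every case of interest, $g$ has no constant term and Theorem \ref{faa} applies directly.

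For the two unrestricted totals $A_{n,S}$ and $B_{n,S}$, I would take $f(z)=\exp(z)$, so that $f_m = 1/m!$ for every $m\ge 0$. Substituting into \eqref{eq:f(g(z))} and reading off the coefficient of $z^n$ gives
\[
\frac{A_{n,S}}{n!} \;=\; [z^n]\exp\!\left(\sum_{s\in S}\frac{z^s}{s}\right) \;=\; \sum_{\substack{\pi\in\mathcal C_n\\ \pi_i\in S}} \frac{1}{|\pi|!}\prod_{\pi_i\in\pi}\frac{1}{\pi_i},
\]
which is exactly the claimed formula; the $B_{n,S}$ case is identical upon replacing $1/\pi_i$ with $1/\pi_i!$, i.e.\ recognising the product as $1/\pi!$ in the multi-index notation already fixed in the paper.

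For the two Stirling identities I would instead take $f(z)=z^k/k!$, so that $f_m = \delta_{m,k}/k!$. Then Theorem \ref{faa} automatically restricts the composition sum to those $\pi$ with $|\pi|=k$, and pulls out the single constant $1/k!$; the inner product over parts is the same as in the $A$- or $B$-case depending on whether we started from the first- or second-kind EGF. That yields both Stirling formulas in one stroke.

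There is essentially no obstacle beyond bookkeeping. The only points that need a line of care are: (i) confirming that the hypothesis $g_0=0$ of Theorem \ref{faa} is satisfied (it is, since $0\notin S$ in all cases considered); (ii) checking the degenerate cases $n=0$ and $k=0$, which match the initial values fixed in the introduction because an empty composition has $|\pi|=0$ and empty product $1$; and (iii) noting that the indicator that each $\pi_i\in S$ arises automatically from the condition $g_{\pi_i}\neq 0$, which is why the sums in the statement can be written as running over compositions with parts constrained to $S$.
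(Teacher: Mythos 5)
Your proposal is correct and follows exactly the paper's own argument: both apply Theorem \ref{faa} with $f(z)=e^z$ (for $A_{n,S}$, $B_{n,S}$) or $f(z)=z^k/k!$ (for the Stirling numbers), and $g(z)=\sum_{s\in S}z^s/s$ or $\sum_{s\in S}z^s/s!$, reading off coefficients and absorbing the indicator $\pi_i\in S$ into the summation condition. Your added remarks on the degenerate cases $n=0$, $k=0$ and the hypothesis $g_0=0$ are minor bookkeeping the paper leaves implicit.
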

\begin{proof}
To prove the formula for $A_{n,S}$, apply Theorem \ref{faa} with $f(z)=e^z$ and $g(z) =  \sum_{s \in S} \frac{z^{s}}{s}$. Therefore, 
$$g_{\pi} =  \begin{cases}\frac{1}{\prod_{\pi_i \in \pi}{\pi_i}}, & \text{every part of $\pi$ is $\in S$} \\ 0, & \text{else}  \end{cases}.$$ 
This allows us to rewrite our composition sum as follows: 
$$ \sum_{\pi \in \mathcal{C}_n} f_{\vert \pi \vert} g_{\pi} = \sum_{\pi \in \mathcal{C}_n} \frac{1}{\vert \pi \vert! \prod_{\pi_i \in \pi}{\pi_i}} \mathbbm{1}_{\pi_i\in S} = \sum_{\substack{   \pi \in \mathcal{C}_n  \\ \pi_i \in S  }} \frac{1}{\vert \pi \vert! \prod_{\pi_i \in \pi}{\pi_i}},$$ 
where the restriction on the parts of $S$ is transferred to the summation. Comparing coefficients completes the proof. The formula for $B_{n,S}$ is proven identically with $g(z) =  \sum_{s \in S} \frac{z^{s}}{s!}$ instead.

To prove the formula for $\begin{bmatrix}n \\k \end{bmatrix}_S$, apply Theorem \ref{faa} with $f(z)=\frac{z^k}{k!}$ and $g(z) =  \sum_{s \in S} \frac{z^{s}}{s}$. Now we have the piecewise definition $$f_{\vert \pi\vert} = \begin{cases}\frac{1}{\vert\pi\vert!}, & \vert\pi\vert=k \\ 0, & \text{else}  \end{cases}.$$
We can then simplify the composition sum since
$$ \sum_{\pi \in \mathcal{C}_n} f_{\vert \pi \vert} g_{\pi} =\sum_{\substack{   \pi \in \mathcal{C}_n  \\ \pi_i \in S  }} \frac{1}{\vert \pi \vert! \prod_{\pi_i \in \pi}{\pi_i}} \mathbbm{1}_{\vert\pi\vert=k}= \sum_{\substack{   \pi \in \mathcal{C}_n  \\ \pi_i \in S \\ |\pi|=k }} \frac{1}{\vert \pi \vert! \prod_{\pi_i \in \pi}{\pi_i}}.$$ 
The analysis for $\begin{Bmatrix}n \\k \end{Bmatrix}_S$ proceeds identically, but with  $g(z) =  \sum_{s \in S} \frac{z^{s}}{s!}$ instead.
\end{proof}
In the case $S=\mathbb{Z}_{\geq 1}$, we recover known expressions for the classic Stirling and Bell numbers. We can simply leave the restriction $\pi_i\in S$ off the summation, since we're now allowing all positive parts.

An avenue for further research is to find a combinatorial proof of any of the above identities. The appearance of a composition sum is not coincidental; it has a very natural connection to all of these numbers. Take the generalized Bell numbers as an example: we care about how we can partition $n$ elements into blocks, where each block has a size found in $S$. \textit{This is precisely an unordered composition of $n$ with each part in $S$}. When passing from compositions to partitions we are overcounting in some way which is accounted for by the composition sum.

\begin{cor}We have the inequalities
$$\begin{bmatrix}n \\k \end{bmatrix}_S \geq  \begin{Bmatrix}n \\k \end{Bmatrix}_S$$ and $$A_{n,S}\geq B_{n,S}.$$
\end{cor}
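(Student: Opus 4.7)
The plan is to read both inequalities off directly from the composition-sum formulas in Theorem \ref{compthm}. For each $k$, those formulas express $\begin{bmatrix}n\\k\end{bmatrix}_S/n!$ and $\begin{Bmatrix}n\\k\end{Bmatrix}_S/n!$ as sums over the \emph{same} index set, namely compositions $\pi\in\mathcal{C}_n$ with $\pi_i\in S$ and $|\pi|=k$. The summands differ only in whether the denominator factor is $\prod_{\pi_i\in\pi}\pi_i$ or $\prod_{\pi_i\in\pi}\pi_i!=\pi!$. So the whole statement reduces to a term-by-term inequality.

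The key elementary observation is that for any positive integer $m$, $m!\ge m$, since $(m-1)!\ge 1$. Taking a product over the parts of any composition $\pi$ of positive integers gives $\pi!=\prod\pi_i!\ \ge\ \prod\pi_i$, hence
\[
\frac{1}{|\pi|!\,\pi!}\ \le\ \frac{1}{|\pi|!\prod_{\pi_i\in\pi}\pi_i}.
\]
First I would apply this inequality summand-by-summand with the extra constraint $|\pi|=k$ to get the first inequality, and then without that constraint (summing over all $\pi\in\mathcal{C}_n$ with $\pi_i\in S$) to get $A_{n,S}\ge B_{n,S}$. Multiplying through by $n!$ recovers the claimed bounds.

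There is essentially no obstacle: the two index sets match identically because of the matching restrictions $\pi_i\in S$ and (optionally) $|\pi|=k$, so no rearrangement of summation is needed. The only thing worth noting in passing is where equality holds: the per-term bound is tight exactly when every part $\pi_i$ lies in $\{1,2\}$, so equality in the corollary occurs whenever $S\subseteq\{1,2\}$ (consistent with the fact that for $S=\{1,2\}$ the involution numbers satisfy $A_{n,S}=B_{n,S}$). One could also remark on a combinatorial interpretation: every set partition block of size $m$ corresponds to $(m-1)!$ distinct cycles on the same underlying set, which explains the inequality bijectively, but the analytic proof via Theorem \ref{compthm} is the most direct route.
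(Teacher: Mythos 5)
Your proof is correct and follows essentially the same route as the paper: both read the inequality termwise off the composition-sum formulas of Theorem \ref{compthm}, using $\pi!\geq\prod_{\pi_i\in\pi}\pi_i$. Your side remark on the equality case (tightness exactly when every part lies in $\{1,2\}$, since $2!=2$) is in fact slightly more accurate than the paper's, which states equality only for $\pi=(1,1,\ldots,1)$.
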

\begin{proof}
For $\pi\in\mathcal{C}_n$, we have $\pi! \geq \prod_{\pi_i\in\pi}\pi_i$ with equality if and only if $\pi_i=1$ -- that is, $\pi=(1,1,\ldots,1)$. 
Looking at the composition sum expressions in Theorem \ref{compthm}, this inequality will therefore be satisfied termwise. 
\end{proof}


\section{Recurrences}
We can then derive several general recurrences satisfied by these numbers, the first two of which generalize \cite[Thm. 4.2, Thm. 5.2]{Moll1}. All of the following identities also have combinatorial and umbral proofs; however, the generating function approach appears to be the most concise for the following simple identities. We also note that since all four types considered in this work are specializations of the Bell polynomials, several of the recurrences described here are specializations of recurrences for the Bell polynomials.
\begin{thm}\label{thm2}
We have the recurrences
$$ A_{n+1,S} = \sum_{s\in S}  \frac{n!}{(n-s+1)!} A_{n-s+1,S},$$
$$ B_{n+1,S} = \sum_{s\in S}  \binom{n}{s-1}B_{n-s+1,S},$$
$$ \begin{bmatrix}n+1 \\k \end{bmatrix}_S = \sum_{s\in S}\frac{n!}{(n-j+1)!}\begin{bmatrix}n-s+1 \\k-1 \end{bmatrix}_S,$$
$$ \begin{Bmatrix}n+1 \\k \end{Bmatrix}_S = \sum_{s\in S}\binom{n}{s-1}\begin{Bmatrix}n-s+1 \\k-1 \end{Bmatrix}_S.$$
\end{thm}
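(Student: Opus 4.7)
The plan is to prove all four recurrences by a single mechanism: differentiate each of the generating functions produced by Theorem \ref{main} and read off the coefficient of $z^n/n!$ via the Cauchy product for exponential generating functions. Since each EGF has the shape $\exp(g(z))$ or $g(z)^k/k!$, differentiation pulls down a factor of $g'(z) = \sum_{s\in S} z^{s-1}$ (for the factorial/Stirling-of-first-kind case) or $g'(z) = \sum_{s\in S} z^{s-1}/(s-1)!$ (for the Bell/Stirling-of-second-kind case), which is exactly what produces the binomial or falling-factorial coefficients on the right-hand side.

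Concretely, for $A_{n,S}$, write $\A(z) = \exp(\sum_{s\in S} z^s/s)$ and compute $\A'(z) = \bigl(\sum_{s\in S} z^{s-1}\bigr)\A(z)$. The coefficient of $z^n/n!$ on the left is $A_{n+1,S}$. On the right, the factor $\sum_{s\in S} z^{s-1}$ has $[z^{s-1}/(s-1)!]$-coefficient equal to $(s-1)!$, so by the EGF product rule the coefficient of $z^n/n!$ is $\sum_{s\in S}\binom{n}{s-1}(s-1)!\,A_{n-s+1,S} = \sum_{s\in S}\tfrac{n!}{(n-s+1)!}A_{n-s+1,S}$, giving the first recurrence. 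The identical argument for $\E(z) := \sum B_{n,S}z^n/n!$ with $g(z) = \sum_{s\in S} z^s/s!$ replaces the inner factor by $\sum_{s\in S} z^{s-1}/(s-1)!$, whose $[z^{s-1}/(s-1)!]$-coefficient is $1$, so the EGF product rule immediately yields $B_{n+1,S} = \sum_{s\in S}\binom{n}{s-1}B_{n-s+1,S}$.

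For the Stirling-type recurrences, let $\mathcal{F}_k(z) = \tfrac{1}{k!}(\sum_{s\in S} z^s/s)^k$. The chain rule gives
\[
\mathcal{F}_k'(z) = \Bigl(\sum_{s\in S} z^{s-1}\Bigr)\cdot\frac{1}{(k-1)!}\Bigl(\sum_{s\in S} z^s/s\Bigr)^{k-1} = \Bigl(\sum_{s\in S} z^{s-1}\Bigr)\mathcal{F}_{k-1}(z),
\]
where $\mathcal{F}_{k-1}$ is precisely the EGF of $\begin{bmatrix}n\\k-1\end{bmatrix}_S$. Comparing coefficients of $z^n/n!$ and using the same EGF Cauchy product computation as above yields the third identity. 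The fourth is identical with $z^s/s$ replaced by $z^s/s!$ throughout.

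This is essentially a one-line computation for each of the four cases, with no real obstacle. The only point that requires a little care is bookkeeping in the EGF Cauchy product: one must remember that the coefficient of $z^{s-1}$ in $\sum_{s\in S} z^{s-1}$ is $1$, so its EGF coefficient (i.e.\ coefficient of $z^{s-1}/(s-1)!$) is $(s-1)!$, which is what converts the clean binomial coefficient $\binom{n}{s-1}$ from the Bell case into the falling factorial $n!/(n-s+1)!$ in the factorial case. Once this is observed, all four recurrences drop out mechanically from differentiating the generating functions of Theorem \ref{main}.
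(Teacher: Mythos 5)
Your proposal is correct and follows essentially the same route as the paper: differentiate the generating functions of Theorem \ref{main}, recognize the derivative as a product of $g'(z)$ with the EGF of the same (or lower-index) family, and compare coefficients via the exponential Cauchy product. The paper only writes out the $B_{n,S}$ case and asserts the rest are analogous, whereas you carry out all four explicitly, including the correct bookkeeping of the $(s-1)!$ factor that produces $n!/(n-s+1)!$ in the factorial/first-kind cases.
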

\begin{proof}
For concreteness we work only with the generalized Bell numbers. Taking a derivative of the generating function from Theorem \ref{main}, $\sum_{n=0}^\infty B_{n,S}\frac{z^n}{n!} = \exp \left(\sum_{s \in S} \frac{z^{s}}{s!} \right),$ gives
\begin{equation*}
\sum_{n=0}^\infty B_{n+1,S} \frac{z^n}{n!} = \left(  \sum_{s \in S} \frac{z^{s-1}}{(s-1)!} \right) \exp \left( \sum_{s \in S} \frac{z^{s}}{s!}\right) = \left(  \sum_{s \in S} \frac{z^{s-1}}{(s-1)!} \right)   \left(\sum_{n=0}^\infty B_{n,S} \frac{z^n}{n!}\right).
\end{equation*}
Taking a Cauchy product and comparing coefficients completes the proof.  The other proofs follow exactly the same lines, and follow from taking derivatives and then comparing coefficients.
\end{proof}
We also present a combinatorial proof of the second recurrence; the rest follow from similar reasoning. Start with $n+1$ elements and distinguish the first one. We can partition these $n+1$ elements in $B_{n+1,S}$ ways. However, we can also add $s-1$ elements to the first one, for any cardinality index $s\in S$. There are $\binom{n}{s-1}$ ways to pick these $s-1$ elements, and then $B_{n-(s-1),S}$ ways to partition the rest. Summing over $s$ completes the proof.

We can also derive \textit{lacunary recurrences} which take far fewer terms to compute. In particular, the formulae below have $\sigma$ completely free so they can take arbitrarily few terms to compute $B_{n,S \cup \sigma}$ if $\sigma$ is large enough relative to $n$. We also note that these recover \cite[Thm 4.1, Thm. 4.6]{Moll1} in the case $S=\{1,2,\ldots,m-1\}$ (or $S=\{1,2,\ldots,m\}$ for Theorem 4.6) and $\sigma=m$, where it was derived combinatorially.
\begin{thm}
If $\sigma\not\in S$, we have the recurrence $$B_{n,S\cup \sigma}=\sum_{i=0}^{\left \lfloor{\frac{n}{\sigma}}\right \rfloor } \frac{n!}{i!(n-\sigma i)!(\sigma!)^i} B_{n-\sigma i,S}. $$
If $\sigma \in S$, we have $$B_{n,S\setminus\sigma}=\sum_{i=0}^{\left \lfloor{\frac{n}{\sigma}}\right \rfloor } \frac{n! (-1)^i}{i!(n-\sigma i)!(\sigma!)^i} B_{n-\sigma i,S}.  $$
\end{thm}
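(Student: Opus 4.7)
The plan is to exploit the exponential generating function from Theorem~\ref{main} and algebraically isolate the contribution of the singleton index $\sigma$. For the first identity, since $\sigma\notin S$, the index set $S\cup\sigma$ is a disjoint union, so the sum in the exponent splits:
$$\sum_{n=0}^\infty B_{n,S\cup\sigma}\frac{z^n}{n!} = \exp\left(\sum_{s\in S}\frac{z^s}{s!} + \frac{z^\sigma}{\sigma!}\right) = \exp\left(\frac{z^\sigma}{\sigma!}\right)\cdot\exp\left(\sum_{s\in S}\frac{z^s}{s!}\right).$$
The right-hand factor is precisely the EGF of $B_{n,S}$, while the left-hand factor expands as the sparse series $\sum_{i\geq 0}\frac{z^{\sigma i}}{i!(\sigma!)^i}$ supported on multiples of $\sigma$.

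Next I would take the Cauchy product of these two EGFs and extract the coefficient of $z^n$. Only the terms with $\sigma i \leq n$ contribute, so the inner index runs over $i = 0,1,\ldots,\lfloor n/\sigma\rfloor$; multiplying through by $n!$ converts the $(n-\sigma i)!$ in the denominator of $B_{n-\sigma i,S}/(n-\sigma i)!$ into the multinomial factor $\frac{n!}{i!(n-\sigma i)!(\sigma!)^i}$ stated in the theorem. This is exactly the same mechanism used in the proof of Theorem~\ref{thm2}, just applied to a factored product of EGFs instead of a differentiated one.

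For the second identity (the case $\sigma\in S$), the same factorization works verbatim with an opposite sign in the exponent: $\exp\bigl(\sum_{s\in S\setminus\sigma}z^s/s!\bigr) = \exp(-z^\sigma/\sigma!)\cdot\exp\bigl(\sum_{s\in S}z^s/s!\bigr)$. Expanding $\exp(-z^\sigma/\sigma!) = \sum_{i\geq 0}\frac{(-1)^i z^{\sigma i}}{i!(\sigma!)^i}$ and repeating the Cauchy product supplies the $(-1)^i$ factor, and the rest of the computation is identical.

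I do not anticipate any genuine obstacle; the argument is purely mechanical once the exponential has been factored. The only bookkeeping subtlety is verifying that $\sigma\notin S$ (resp.\ $\sigma\in S$) guarantees the clean additive (resp.\ subtractive) split in the exponent and therefore a legitimate multiplicative split of the EGFs -- without this hypothesis one would double-count (or over-subtract) the contribution from $\sigma$. It is worth remarking that the same result could be obtained by applying Theorem~\ref{faa} with $f(z)=e^z$ and $g$ perturbed by $\pm z^\sigma/\sigma!$, but the direct factorization of EGFs is shorter and makes the lacunary structure manifest.
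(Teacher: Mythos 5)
Your proposal is correct and follows essentially the same route as the paper: multiply (equivalently, factor) the EGF by $\exp\left(\pm\frac{z^{\sigma}}{\sigma!}\right)$, expand that factor as a lacunary series supported on multiples of $\sigma$, take the Cauchy product, and compare coefficients of $z^n$. The sign discussion for the $\sigma\in S$ case matches the paper's treatment as well.
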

\begin{proof}
We begin with the generating function from Theorem \ref{main}, $\sum_{n=0}^\infty B_{n,S}\frac{z^n}{n!} = \exp \left(\sum_{s \in S} \frac{z^{s}}{s!} \right).$

Assuming that $\sigma\not\in S$, we then multiply both sides by $\exp\left(\frac{z^{\sigma}}{\sigma!}\right)$. On one side, this gives $$\exp\left(\frac{z^{\sigma}}{\sigma!}\right)\exp \left(\sum_{s \in S} \frac{z^{s}}{s!} \right) = \exp \left(\sum_{s \in S \cup \sigma} \frac{z^{s}}{s!} \right)  = \sum_{n=0}^\infty B_{n,S\cup \sigma}\frac{z^n}{n!} .$$ On the other side, this then gives $$ \exp\left(\frac{z^{\sigma}}{\sigma!}\right) \sum_{n=0}^\infty B_{n,S}\frac{z^n}{n!}   =     \sum_{n=0}^\infty \frac{z^{\sigma n}}{(\sigma!)^n n!} \times \sum_{n=0}^\infty B_{n,S}\frac{z^n}{n!}  =    \sum_{n=0}^\infty z^n \sum_{i=0}^{\left \lfloor{\frac{n}{\sigma}}\right \rfloor } \frac{1}{i!(n-\sigma i)!(\sigma!)^i} B_{n-\sigma i,S}. $$ Comparing coefficients completes the proof. The second formula follows in an identical manner, after multiplying by $\exp\left(-\frac{z^{\sigma}}{\sigma!}\right)$ instead.
\end{proof}

We can also prove a similar theorem about $A_{n,S}$ using exactly the same reasoning, special cases of which are \cite[Thm. 5.1, Thm. 5.5]{Moll1}.
\begin{thm}
If $\sigma\not\in S$, we have the recurrence $$A_{n,S\cup \sigma}=\sum_{i=0}^{\left \lfloor{\frac{n}{\sigma}}\right \rfloor } \frac{n!}{i!(n-\sigma i)!\sigma^i} A_{n-\sigma i,S}. $$
If $\sigma \in S$, $$A_{n,S\setminus\sigma}=\sum_{i=0}^{\left \lfloor{\frac{n}{\sigma}}\right \rfloor } \frac{n! (-1)^i}{i!(n-\sigma i)!\sigma^i} A_{n-\sigma i,S}.  $$
\end{thm}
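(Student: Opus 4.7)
The plan is to mirror exactly the proof of the preceding theorem for $B_{n,S}$, but using the generating function $\sum_{n=0}^\infty A_{n,S}\frac{z^n}{n!} = \exp\left(\sum_{s \in S} \frac{z^{s}}{s}\right)$ from Theorem \ref{main}. The only structural difference between the $A$-case and the $B$-case is that cycles contribute $\frac{z^s}{s}$ rather than sets contributing $\frac{z^s}{s!}$, which is why the factor $(\sigma!)^i$ in the denominator gets replaced by $\sigma^i$.

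For the first identity, I would assume $\sigma \not\in S$ and multiply both sides of the EGF identity by $\exp\!\left(\frac{z^\sigma}{\sigma}\right)$. On the left, the exponents combine to give $\exp\!\left(\sum_{s \in S \cup \sigma}\frac{z^s}{s}\right)$, which is the EGF for $A_{n,S\cup\sigma}$. On the right, I would expand $\exp\!\left(\frac{z^\sigma}{\sigma}\right) = \sum_{i \geq 0}\frac{z^{\sigma i}}{i!\,\sigma^i}$ and take the Cauchy product with $\sum_{n\geq 0}A_{n,S}\frac{z^n}{n!}$. Collecting the coefficient of $z^n$ and multiplying by $n!$ yields the stated sum, the upper limit $\lfloor n/\sigma\rfloor$ arising from the requirement $\sigma i \leq n$.

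For the second identity, when $\sigma \in S$, I would multiply instead by $\exp\!\left(-\frac{z^\sigma}{\sigma}\right)$; on the left this removes the $\sigma$ term from the exponent, giving the EGF for $A_{n,S\setminus\sigma}$. The right side is identical to the previous computation except that $\frac{z^{\sigma i}}{i!\sigma^i}$ is replaced by $\frac{(-1)^i z^{\sigma i}}{i!\sigma^i}$, which introduces the alternating sign $(-1)^i$ in the final formula. Comparing coefficients of $z^n$ finishes the proof.

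There is essentially no obstacle here: the generating-function identity linearizes the set operation $S\mapsto S\cup\sigma$ (respectively $S\mapsto S\setminus\sigma$) into multiplication (respectively division) by a single exponential factor, and the resulting Cauchy product is elementary. The only bookkeeping point worth double-checking is the factor $\sigma^i$ versus $(\sigma!)^i$ in the denominator, which is the sole place where the structural distinction between $A_{n,S}$ and $B_{n,S}$ enters.
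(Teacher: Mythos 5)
Your proposal is correct and is exactly the argument the paper intends: the paper omits the proof, stating only that it follows ``using exactly the same reasoning'' as the $B_{n,S}$ lacunary recurrence, i.e.\ multiplying the EGF $\exp\left(\sum_{s\in S}\frac{z^s}{s}\right)$ by $\exp\left(\pm\frac{z^\sigma}{\sigma}\right)$ and comparing coefficients. Your bookkeeping of $\sigma^i$ versus $(\sigma!)^i$ is the right (and only) point of difference from the $B$-case.
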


We can also prove other properties based on splitting $S$, generalizing \cite[Thm. 4.7]{Moll1}. We note that $\binom{n}{n_1,n_2,\ldots}$ is the multinomial coefficient $\frac{n!}{n_1!n_2!\cdots}$, and $\mathcal{C}_n$ in the set of compositions of $n$, described in the next section. 
\begin{thm}
Let $S=\cup_i S_i$, where $S_i \cap S_j = \emptyset$ for $i\neq j$, so that the sets $\{S_i\}$ partition $S$. Then $$B_n = \sum_{\pi \in \mathcal{C}_n} \binom{n}{\pi_1,\pi_2,\ldots} B_{\pi_1,S_1}B_{\pi_2,S_2}\cdots,$$ and $$A_n = \sum_{\pi \in \mathcal{C}_n} \binom{n}{\pi_1,\pi_2,\ldots} A_{\pi_1,S_1}A_{\pi_2,S_2}\cdots.$$
\end{thm}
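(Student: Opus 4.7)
The plan is to exploit the multiplicative behavior of the exponential in the generating functions from Theorem~\ref{main}. Because the sets $S_i$ are pairwise disjoint with $S = \cup_i S_i$, the inner sum in the exponent decomposes as
\[
\sum_{s \in S}\frac{z^s}{s!} \;=\; \sum_i \sum_{s \in S_i}\frac{z^s}{s!},
\]
and identically with $s!$ replaced by $s$ for the $A_{n,S}$ generating function. This is the only place the hypothesis on the partition is used.

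Next, applying the identity $\exp(a+b) = \exp(a)\exp(b)$ turns the sum inside the exponential into a product of exponentials, each of which is again a generating function supplied by Theorem~\ref{main}:
\[
\sum_{n \geq 0} B_{n,S}\frac{z^n}{n!} \;=\; \prod_i \exp\!\left(\sum_{s \in S_i}\frac{z^s}{s!}\right) \;=\; \prod_i \sum_{m\geq 0} B_{m,S_i}\frac{z^m}{m!},
\]
and similarly for $A_{n,S}$ with $B_{m,S_i}$ replaced by $A_{m,S_i}$.

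The third step is to extract the coefficient of $z^n/n!$ from the product on the right using the standard multinomial formula for products of EGFs,
\[
[z^n/n!]\prod_i f_i(z) \;=\; \sum_{n_1+n_2+\cdots = n} \binom{n}{n_1,n_2,\ldots}\prod_i [z^{n_i}/n_i!]f_i(z),
\]
where the tuples $(n_1,n_2,\ldots)$ have nonnegative entries summing to $n$. Matching the tuple with $\pi = (\pi_1,\pi_2,\ldots)$ gives the claimed formulas, and the derivation for $A_{n,S}$ is line-for-line identical.

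The only real subtlety, more a bookkeeping point than an obstacle, is reconciling the index set $\mathcal{C}_n$, which earlier in the paper denotes strict compositions (positive parts), with the weak compositions that naturally appear here. This causes no harm: the initial values $B_{0,S_i} = A_{0,S_i} = 1$ make the contribution of any zero-valued $n_i$ trivial, so the statement should be read as summing over length-$k$ weak compositions of $n$, where $k$ is the number of blocks in the partition of $S$. One could alternatively state the identity strictly over $\mathcal{C}_n$ by inserting an additional sum over which subset of the $S_i$ receives a positive part, but the EGF argument produces the weak-composition form most cleanly.
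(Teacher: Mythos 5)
Your proposal is correct and follows essentially the same route as the paper: split the exponent over the disjoint sets $S_i$, factor the exponential into a product of the component EGFs, and extract coefficients by the multinomial convolution formula. Your remark about weak versus strict compositions is a genuine bookkeeping point that the paper glosses over, and your resolution via the initial values $A_{0,S_i}=B_{0,S_i}=1$ is the right one.
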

\begin{proof}
We begin with the generating function for $B_{n,S}$, split it into parts corresponding to each $S_i$, and then re-express this as a convolution. In particular,
$$\sum_{n=0}^\infty B_{n,S}\frac{z^n}{n!} = \exp \left(\sum_{s \in S} \frac{z^{s}}{s!} \right) =\prod_i \exp \left(\sum_{s \in S_i} \frac{z^{s}}{s!} \right) =\prod_i \sum_{n=0}^\infty B_{n,S_1}\frac{z^n}{n!}.$$ Computing coefficients of $z^n$ in the resulting product completes the proof.
\end{proof}

\begin{cor}\label{div}
Let $S=\cup_i S_i$, where $S_i \cap S_j = \emptyset$ for $i\neq j$ so that the sets $\{S_i\}$ partition $S$, and take $p$ a prime. Then
$$B_{p,S} \equiv \sum_i B_{p,S_i} \pmod{p},$$ and $$A_{p,S} \equiv \sum_i A_{p,S_i} \pmod{p}.$$
\end{cor}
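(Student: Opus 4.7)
The plan is to specialize the preceding theorem to $n=p$ and then observe that most of the multinomial coefficients that appear vanish modulo $p$. The theorem reads
\[B_{p,S} = \sum_{\pi\in\mathcal{C}_p}\binom{p}{\pi_1,\pi_2,\ldots}B_{\pi_1,S_1}B_{\pi_2,S_2}\cdots,\]
where the inner tuple is indexed by the subsets $S_i$ of the partition of $S$. The key arithmetic fact is that for a nonnegative tuple $(n_1,n_2,\ldots)$ with $\sum_i n_i = p$, the multinomial coefficient $\binom{p}{n_1,n_2,\ldots} = \frac{p!}{\prod_i n_i!}$ is divisible by $p$ unless exactly one $n_j$ equals $p$ and all other $n_i$ vanish. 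Indeed, if every $n_i < p$, no factor in the denominator is a multiple of $p$, so the $p$ appearing in the numerator $p!$ survives in the reduced fraction.

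It remains to identify the surviving contributions. When $n_j = p$ and $n_i = 0$ for all $i\neq j$, the multinomial coefficient equals $1$, and the product $B_{n_1,S_1}B_{n_2,S_2}\cdots$ collapses to $B_{p,S_j}\prod_{i\neq j}B_{0,S_i} = B_{p,S_j}$ by the initial condition $B_{0,S_i}=1$. Summing over the choice of $j$ gives $B_{p,S}\equiv\sum_j B_{p,S_j}\pmod{p}$. The proof for $A$ is verbatim the same, using the analogous formula from the preceding theorem and the initial condition $A_{0,S_i}=1$.

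The only thing to watch is the case when the partition $\{S_i\}$ is infinite, but this is harmless: for fixed $n=p$, only finitely many entries of any tuple summing to $p$ can be nonzero, so the sum is effectively finite and the argument goes through unchanged. I do not anticipate a substantive obstacle; the content of the corollary is essentially the observation that the Cauchy-product identity of the previous theorem degenerates modulo $p$ because of the prime row of Pascal's multinomial triangle.
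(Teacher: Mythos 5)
Your proposal is correct and follows essentially the same route as the paper: specialize the splitting theorem to $n=p$, note that $\binom{p}{n_1,n_2,\ldots}\equiv 0\pmod p$ unless a single entry equals $p$, and use the initial values $A_{0,S_i}=B_{0,S_i}=1$ to identify the surviving terms. Your explicit handling of tuples with zero entries and of infinite partitions is slightly more careful than the paper's wording, but the argument is the same.
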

\begin{proof}
If a composition $\pi$ has each part strictly smaller than $n$ then $\binom{p}{\pi_1,\pi_2,\ldots} \equiv 0 \pmod{p}$ since $\binom{p}{\pi_1,\pi_2,\ldots} = \frac{p!}{\pi_1!,\pi_2!,\ldots}$ has a power of $p$ in the numerator which is not canceled by one in the denominator. Every term in the summation then vanishes modulo $p$, save for those of the form $\pi=(0,0,\ldots, \pi_i=p, \ldots,0)$. Taking into account the initial values $A_{0,S_j}=B_{0,S_j}=1$ for those compositions completes the proof.
\end{proof}


We now discuss \textit{Spivey's formula} \cite{Spivey}, $$B_{n+m}=\sum_{k=0}^n\sum_{j=0}^m j^{n-k} \begin{Bmatrix}m \\j \end{Bmatrix}\binom{n}{k}B_k.$$ The cases $n=0$ and $m=1$ are $$B_m=\sum_{j=0}^m \begin{Bmatrix}m \\j \end{Bmatrix}$$ and $$B_{n+1} = \sum_{k=0}^n \binom{n}{k}B_k,$$ which are the two most basic recurrences satisfied by the Bell numbers. These have the analogs $$B_{m,S} = \sum_{j=0}^m \begin{Bmatrix}m \\j \end{Bmatrix}_S$$ and (from Theorem \ref{thm2}) $$B_{n+1,S} = \sum_{s\in S}  \binom{n}{s-1}B_{n-s+1,S},$$ suggesting the existence of a Spivey-type formula for the generalized Bell numbers. However, this analog has been particularly difficult to find.

\section{Umbral approach}
The goal of \textit{umbral calculus} is to simplify the proof of many identities by turning manipulation of sequences into manipulations of moments; this effectively turns questions about subscripts into questions about superscripts. We can do this if, for a sequence $\{a_n\},$ we can find a measure $\mu$ such that
\[
a_n = \int x^n d\mu\left(x\right).
\]

Ideally, $\mu$ would be a probability measure, i.e. a positive measure with unit total integral (which implies $a_0=1$),  but it is important to note that umbral calculus does not require that $\mu$ is a probability measure. However, not every sequence of real numbers $\{a_n\}$ can be a (probabilistic) moment sequence. Basic probabilistic results such as $\text{Var} (\A) = \E \A^2 - (\E \A)^2 \geq 0$ impose strong constraints on the sequence $\{a_n\}.$ For example, we cannot have the moment sequence $\{a_1 = 1, a_2=0\}$ since this yields a negative variance. 

However, if we are given a sequence that can be written as the moments of a measure $\mu$, we can then rewrite identities on sequences as identities on the moments of their underlying random variables. For a concrete example, consider the expression 
\begin{equation}
\label{binom}
\sum_{k=0}^n \binom{n}{k}a_k a_{n-k} =  \sum_{k=0}^n \binom{n}{k} \E \A_1^k \E \A_2^{n-k} =  \E \sum_{k=0}^n \binom{n}{k}  \A_1^k \A_2^{n-k} = \E (\A_1+\A_2)^n,
\end{equation}
where $\A_1$ and $\A_2$ are two independent random variables. Now, based on the moments of the distribution $\A_1+\A_2$, we obtain a nontrivial identity for the sequence $\{a_n\}$. For example, take $\A_1 \sim \Gamma_{p_1}$ and $\A_2 \sim \Gamma_{p_2}$ where $\sim$ denotes equality of distributiions. We have let $\Gamma_{p}$ denote a gamma-distributed random variable with density
\[
\frac{1}{\Gamma\left(p\right)} e^{-x} x^{p-1},
\]
so that $\A_1 + \A_2 \sim \Gamma\left(p_1+p_2\right).$ Since the moments of a Gamma random variable are the Pochhammer symbols 
\[
\mathbb{E}\Gamma_{p}^{k} = \left(p\right)_{k}:= \prod_{i=0}^{k-1}(p+i) ,
\]
identity \eqref{binom} translates into

\begin{equation}
\label{Chu}
\sum_{k=0}^n \binom{n}{k} \left(p_1\right)_k \left(p_2\right)_{n-k}=  \left(p_1+p_2\right)_n,
\end{equation}
which is nothing but the Chu-Vandermonde identity.

As can be seen from this example, the keys ideas at play here are the linearity of the expectation and the fact that $\E \A_1\A_2 = \E\A_1 \times \E \A_2$ for independent random variables. This allows us to transform questions involving the sequence $\{a_n\}$, which \textit{a priori} has no structure, into questions about moments of a random variable, which behave like powers. For more background into umbral approaches, consult \cite{Gessel, Roman}.

After some annoying reverse engineering, we can explicitly construct a random variable which has the complete Bell polynomials as a moment sequence, showing that \textbf{the Bell polynomials behave umbrally}. They are in fact special cases of a \textit{Sheffer sequence}, which is one of the most general polynomial sequences which satisfies umbral relations. The construction is quite involved, but allows us to systematize the derivation of identities for the Bell polynomials through umbral methods. In the next section, we will specialize this analysis to obtain results for the generalized Bell and Stirling numbers. 

We first describe a \textit{stable random variable}, which is a random variable $X$ such that there exist constants $c_n$ and $d_n$ so that $$X_1+X_2+\ldots+X_n \sim c_n X+d_n, $$ where the $\{X_i\}$ are independent and identically distributed copies of $X$. It is a general result \cite[Chapter 1]{Nolan} that a variable is stable if and only if it is equal to $aZ+b$, where $Z$ has characteristic function
$$
\E \exp\left(iuZ  \right) = \begin{cases}
\exp\left(- \vert u\vert^\alpha \left( 1-i\beta  \thinspace\text{sign}(u)\tan\left(\frac12\alpha\pi\right) \right)\right) \thinspace\thinspace\thinspace & \alpha \neq 1 ,   \\
\exp\left(- \vert u\vert \left( 1+i\beta  \thinspace\text{sign}(u)\frac{2}{\pi}\log\left(u\right) \right)\right) \thinspace\thinspace\thinspace & \alpha = 1.    
\end{cases}
$$ This is a two parameter distribution under the restrictions $0< \alpha \leq 2$ and $-1\leq \beta \leq 1$.

We introduce the \textit{symmetric $\alpha$-stable distribution} $S_\alpha$, which is the case $\beta=0$. It is symmetric around the origin and has moment generating function $\mathbb{E} e^{u \mathcal{S}_\alpha} =e^{ u ^{{\alpha}}}.$ Note that we still have the very strict requirement $0<\alpha\leq 2$, which is why we cannot assign $\A_j\sim S_j$ in the results that follow.

\begin{thm}\label{thm5}
Let $\A_j=\mathcal{W}_j\mathcal{S}_{\frac{1}{j}}^{-\frac{1}{j}}$ be a product of independent random variables. Here, $$\text{Pr}\left\{\mathcal{W}_j = \exp\left(\frac{2\pi i l}{j}\right)\right\} = \frac{1}{j},$$ $0\leq l \leq j-1$, so $\mathcal{W}_j$ is a complex-valued random variable equiprobable on the $j$-th roots of unity. $\mathcal{S}_{1/j}$ is a  symmetric $\alpha$-stable distribution with characteristic parameter $\frac{1}{j}$.

Then the random variable $$\A \sim \sum_{j=0}^\infty a_j^\frac{1}{j}\left(j!\right)^{-\frac{1}{j}}\A_j,$$ where the $\{\A_j\}$ are independent random variables distributed as above, has moment generating function $$\E e^{t\A} =\sum_{j=0}^\infty B_j(a_0,\ldots,a_n)\frac{t^j}{j!}.$$
Hence, $$ \E \A^n = B_n(a_0,\ldots,a_n).$$
\end{thm}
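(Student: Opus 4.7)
The plan is to compute $\E e^{t\A}$ directly and identify it with the EGF $\exp(\sum_{n\geq 0}a_n t^n/n!)$ that defines the complete Bell polynomials. Independence of the summands $\A_j$ gives the factorisation
\[
\E e^{t\A} = \prod_{j\geq 1} \E \exp\bigl(t\, a_j^{1/j}(j!)^{-1/j}\A_j\bigr),
\]
so the entire theorem reduces to the single building-block identity $\E e^{s\A_j}=e^{s^j}$: substituting $s=t\,a_j^{1/j}(j!)^{-1/j}$ yields $s^j=t^j a_j/j!$, and the product over $j$ collapses to the required exponential (the $j=0$ factor $e^{a_0}$ is absorbed as a deterministic additive constant in $\A$).

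To prove the building-block identity I would expand the exponential, use independence of $\mathcal{W}_j$ and $\mathcal{S}_{1/j}$, and apply the roots-of-unity filter $\E \mathcal{W}_j^n = \frac{1}{j}\sum_{l=0}^{j-1}e^{2\pi i l n/j}=\mathbbm{1}_{j\mid n}$. Only indices $n=jm$ survive, so
\[
\E e^{s\A_j} = \sum_{m\geq 0} \frac{s^{jm}}{(jm)!}\, \E \mathcal{S}_{1/j}^{-m}.
\]
Comparing term by term with $e^{s^j}=\sum_m s^{jm}/m!$ reduces the whole theorem to the moment identity
\[
\E \mathcal{S}_{1/j}^{-m} = \frac{(jm)!}{m!},
\]
which I would extract from the defining relation $\E e^{u\mathcal{S}_\alpha}=e^{u^\alpha}$ by a Mellin/Laplace computation: integrating $\Gamma(p)^{-1}u^{p-1}$ against $e^{-u^\alpha}$ gives $\E \mathcal{S}_\alpha^{-p}=\Gamma(p/\alpha)/(\alpha\Gamma(p))$, which specialises at $p=m$, $\alpha=1/j$ to $(jm)!/m!$ after an elementary gamma manipulation.

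The hard part is making this Mellin step rigorous. Integer inverse moments of a symmetric $\alpha$-stable density diverge at the origin, and the fractional power $\mathcal{S}_{1/j}^{-1/j}$ requires a branch choice when $\mathcal{S}_{1/j}$ is negative. The cleanest fix is to reinterpret $\mathcal{S}_\alpha$ as the positive one-sided stable subordinator with Laplace transform $e^{-\lambda^\alpha}$, for which the Mellin identity above is literally true; equivalently one treats the whole calculation as an equality of formal power series in $t$ whose coefficients are the well-defined products $\E\mathcal{W}_j^n\,\E\mathcal{S}_{1/j}^{-n/j}$. Once this interpretation is fixed the rest is a routine matching of Taylor coefficients, and the umbral conclusion $\E\A^n = B_n(a_0,\ldots,a_n)$ follows by reading off the coefficient of $t^n/n!$ in the MGF.
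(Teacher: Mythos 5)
Your proof follows the same top-level route as the paper: factor $\E e^{t\A}$ over the independent summands and reduce everything to the building-block identity $\E e^{s\A_j}=e^{s^j}$, then match the resulting exponential with the defining EGF of the complete Bell polynomials. The difference is in how that building block is handled. The paper simply quotes the moment formula $\E\A_j^{qj}=(qj)!/q!$ (with vanishing moments off multiples of $j$) from the references [Vignat1, Vignat2] and reads off $\E e^{t\A_j}=e^{t^j}$; you instead derive it from scratch via the roots-of-unity filter $\E\mathcal{W}_j^n=\mathbbm{1}_{j\mid n}$ together with the Mellin computation $\E\mathcal{S}_\alpha^{-p}=\Gamma(p/\alpha)/(\alpha\Gamma(p))$, which correctly specializes to $(jm)!/m!$. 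This buys a self-contained argument, and in the process you identify a genuine imprecision in the statement as given: a truly \emph{symmetric} $\alpha$-stable law has characteristic function $e^{-|u|^\alpha}$, no moment generating function, and divergent negative integer moments (its density is positive at the origin), so the computation only literally goes through for the positive one-sided stable subordinator with Laplace transform $e^{-\lambda^\alpha}$ (or else as an identity of formal power series), exactly as you propose. One small slip: the $j=0$ term cannot be absorbed as an additive deterministic constant in $\A$ (that would contribute a factor $e^{ct}$, i.e.\ shift $a_1$, not produce the $t$-independent factor $e^{a_0}$); the clean fix is to start the sum at $j=1$, which matches the standard convention for complete Bell polynomials and is evidently what both the statement and the paper's proof intend.
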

\begin{proof}
The variable $\A_j$ has been explicitly described before, and has moments given by \cite{Vignat1, Vignat2}
$$\E \A_j^n =\begin{cases} 0 &n \not\equiv 0 \pmod j \\  \frac{\left(qj\right)!}{q!} &n=qj \end{cases}.$$
Therefore, it has moment generating function $$\E e^{t\A_j} = \sum_{n=0}^\infty \E \A_j^n \frac{t^n}{n!} = \sum_{n=0}^\infty \frac{t^{jn}}{\left(jn\right)!} \frac{{\left(jn\right)}!}{n!} = e^{t^j}.$$
Due to the independence of the $\{\A_j\}$, we then know that 
$$\E e^{t\A} =  \prod_{j=0}^\infty \E e^{a_j^{\frac{1}{j}}\left(j!\right)^{-\frac{1}{j}}t\A_j} = \prod_{j=0}^\infty e^{a_j \frac{t^j}{j!}  } = \exp\left(\sum_{j=0}^\infty a_j \frac{t^j}{j!}\right) =\sum_{j=0}^\infty B_j(a_0,\ldots,a_n)\frac{t^j}{j!},$$
which completes the proof.
\end{proof}

We can then exploit this representation to prove umbral identities. Most of the recurrence relations proven earlier also have simple umbral proofs. We can also construct a conjugate variable for $\mathcal{A}_j$ -- a random variable $\tilde{\A}_j$ that is independent of $\A_j$ and satisfies
\[
\mathbb{E} \left(\A_j + \tilde{\A}_j \right)^n = \delta_{n0},
\]
which will be extremely useful in generating umbral identities. Here, $\delta_{ij}$ is the Kronecker delta function.

\begin{thm}
Let $\tilde{\A}_j \sim \exp \left( {\frac{\pi i}{j}} \right) \A_j;$ then $\tilde{\A}_j$ is conjugate to $\A$. Moreover, $$\tilde{\A} \sim \sum_{j=0}^\infty a_j^\frac{1}{j}(j!)^{-\frac{1}{j}}\tilde{\A}_j,$$ where the $\{\tilde{\A}_j\}$ are independent random variables, is conjugate to $\A$. Hence, $$ \E (\A+\tilde{\A})^n 
= \delta_{n0}.$$
\end{thm}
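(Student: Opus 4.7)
The plan is to push everything through moment generating functions (MGFs), mirroring the computation in the proof of Theorem~5. The identity $\E e^{t\A_j}=e^{t^j}$ established there has a useful feature: the exponent is a homogeneous polynomial of degree $j$ in $t$, so multiplying $\A_j$ by a $j$-th root of $-1$ should flip the sign of the exponent, and independence then turns the sum of MGFs into a cancellation. This is essentially the whole content of conjugacy.

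First I would dispatch the single-index case. Interpreting $\tilde{\A}_j \sim e^{\pi i/j}\A_j$ as equality in distribution, and taking $\tilde{\A}_j$ independent of $\A_j$, its MGF reads
\[
\E e^{t\tilde{\A}_j} \;=\; \E e^{t e^{\pi i/j}\A_j} \;=\; \exp\!\bigl((t e^{\pi i/j})^j\bigr) \;=\; \exp(-t^j).
\]
Independence of $\A_j$ and $\tilde{\A}_j$ then gives
\[
\E e^{t(\A_j+\tilde{\A}_j)} \;=\; e^{t^j}\cdot e^{-t^j} \;=\; 1,
\]
and reading off the coefficient of $t^n/n!$ produces $\E(\A_j+\tilde{\A}_j)^n = \delta_{n0}$, which is conjugacy at the $j$-th level.

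Next I would bootstrap to $\A$ and $\tilde{\A}$ by a product argument. Assuming the families $\{\A_j\}$ and $\{\tilde{\A}_j\}$ are jointly independent and writing $\tilde{\A} \sim \sum_j a_j^{1/j}(j!)^{-1/j}\tilde{\A}_j$, a rescaling $s_j = t a_j^{1/j}(j!)^{-1/j}$ of the previous identity yields
\[
\E e^{t(\A+\tilde{\A})} \;=\; \prod_{j=0}^{\infty}\E e^{s_j(\A_j+\tilde{\A}_j)} \;=\; \prod_{j=0}^{\infty} 1 \;=\; 1,
\]
so once again extracting the coefficient of $t^n/n!$ gives $\E(\A+\tilde{\A})^n=\delta_{n0}$. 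The main subtlety, such as it is, is interpretational: one must read $\tilde{\A}_j\sim e^{\pi i/j}\A_j$ as equality in distribution and not as a pointwise identity. Otherwise $\A_j$ and $\tilde{\A}_j$ would be deterministically linked and the cancellation $e^{t^j}\cdot e^{-t^j}=1$ would be replaced by $\exp\!\bigl((1+e^{\pi i/j})^j t^j\bigr)$, which is not identically $1$. Apart from that, the argument is a one-line MGF computation per level plus independence to take the product.
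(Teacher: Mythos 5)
Your proposal is correct and follows essentially the same route as the paper: compute $\E e^{t\tilde{\A}_j}=e^{-t^j}$, multiply moment generating functions using independence, and extract coefficients of $t^n/n!$. The only cosmetic difference is that you obtain the MGF of $\tilde{\A}_j$ by substituting $s=te^{\pi i/j}$ into $\E e^{s\A_j}=e^{s^j}$ rather than by writing out the moments of $\tilde{\A}_j$ with their $(-1)^q$ factors, and your explicit remark that $\tilde{\A}_j\sim e^{\pi i/j}\A_j$ must be read as equality in distribution (with $\tilde{\A}_j$ independent of $\A_j$) is a point the paper leaves implicit.
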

\begin{proof}
Two independent random variables $\mathcal{X}$ and $\mathcal{Y}$ are \textit{conjugate} if and only if $\E e^{t\mathcal{X}}e^{t\mathcal{Y}} =1$; we can show this by expanding this the expectation as a series. Therefore, $$\E e^{t\mathcal{X}}e^{t\mathcal{Y}} = \sum_{n=0}^\infty \frac{t^n}{n!}\E (\mathcal{X}+\mathcal{{Y}})^n=1,$$ and $E (\mathcal{X}+\mathcal{{Y}})^n =\delta_{n0}$.

$\mathcal{A}_j$ and $\tilde{\A}_j$ can be seen as conjugate after manually comparing moment generating functions: $\tilde{A}_j$ has moments $$\E \tilde{\A}_j^n =\begin{cases} 0 &n \not\equiv 0 \pmod j \\  \frac{qj!}{q!} (-1)^q &n=qj \end{cases},$$
so $$\E e^{t\tilde{\A}_j} = \sum_{n=0}^\infty \E \tilde{\A}_j^n \frac{t^n}{n!} = \sum_{n=0}^\infty \frac{t^{jn}}{jn!} \frac{{jn}!}{n!} (-1)^n= e^{-t^j}.$$

Then, $\E e^{t\A_j}e^{t\tilde{\A}_j} = e^{t^j}e^{-t^j}=1$ and we are done. Due to the independence of the $\{\tilde{\A}_j\}$, we then compute 
$$\E e^{t\tilde{\A}} =  \prod_{j=0}^\infty \E e^{a_j^{\frac{1}{j}}\left(j!\right)^{-\frac{1}{j}}t\tilde\A_j} = \prod_{j=0}^\infty e^{-a_j \frac{t^j}{j!}  } = \exp\left(-\sum_{j=0}^\infty a_j \frac{t^j}{j!}\right).$$ Using this expression, $\E e^{t\A}e^{t\tilde{\A}} =1$ and we are done.
\end{proof}

While an umbral approach simply requires us to verify that a sequence is a moment sequence, by explicitly constructing the relevant random variable we can obtain a slew of new expression for the Bell and Stirling numbers. By specializing Theorem \ref{thm5} to the Bell and factorial numbers, we have the following important corollaries:
\begin{cor}\label{corb}
If $\{\A_j\}$ is a set of independent random variables, as given in Theorem \ref{thm5}, then for $$\A \sim \sum_{s \in S} \left(s!\right)^{-\frac{1}{s}}\A_s,$$ we have $$\E \A^n = B_{n,S}. $$
\end{cor}
\begin{cor}\label{cora}
Let $$\A \sim \sum_{s \in S} s^{-\frac{1}{s}}\A_s.$$ Then $$\E \A^n = A_{n,S}. $$
\end{cor}

\section{Next steps}
These results suggest the study of other types of restricted Bell numbers. An example would be ``even" or ``odd" Bell numbers, which arise from taking $S_e:=\{2,4,6,\ldots\}$ and $S_o:=\{1,3,5,\ldots\}$. The even and odd Bell numbers that count the number of partitions of $n$ into blocks of even (respectively, odd) size. While all of the above theorems apply to the even and odd Bell numbers, we also note that we have results like $B_n = \sum_{k=0}^n \binom{n}{k} B_{k,S_e}B_{n-k,S_o}$. This means that further information about the even and odd Bell numbers could yield new information about the classical case. Another interesting approach would be to compute the valuations of these dissections with respect to small primes, and how that relates to the $p$-adic valuation of the Bell numbers. We already initiated work in this direction with Corollary \ref{div}.

In fact, work has already been done on the dissection of $B_n$, depending on whether it splits $n$ into an even or odd number of blocks - the so called `complementary Bell numbers'. This is the subject of \textit{Wilf's Conjecture}, which asks whether the two values are ever equal \cite{Valerio}. However, our methods are more suited towards dissecting $B_n$ into whether is splits $n$ into blocks of even or odd size.

In any case, our generating functions provide \textit{refinements} of their classical counterparts, which enables us to dissect the classical case in various ways. For instance, the rank of a partition naturally dissects partitions into residues classes modulo $5$. It provides a refinement of the classical partition counting function $p(n)$. Analogously, we could consider Bell and Stirling numbers consisting of box sizes modulo a prime $p$. When $p=2$ we are led to the even and odd Bell numbers previously discussed. However, we could consider the sets $S_p^{(1)} = \{1, p+1,2p+1,\ldots\}$, $S_p^{(2)} = \{2, p+2,p+2,\ldots\}$, and more generally $S_p^{(i)} = \{i, p+i,2p+i,\ldots\}$ for $1\leq i \leq p$. Then $\mathbb{Z}_{\geq 1} = \cup_{i=1}^{p}S_p^{(i)}$ and we recover the classical case as the multinomial sum
$$B_n = \sum_{\pi \in \mathcal{C}_n} \binom{n}{\pi_1,\pi_2,\ldots} B_{\pi_1,S_p^{(1)}}B_{\pi_2,S_p^{(2)}}\cdots.$$

Moving away from the Bell numbers, there are still many fundamental properties of the generalized Stirling numbers that need to be explored. The Stirling numbers satisfy many classical recurrences and orthogonality relations, and it is an interesting question whether the generalized Stirling numbers satisfy similar identities. For instance, if $1\in S$ then matrices related to the generalized Stirling numbers have well defined inverses with combinatorial interpretations \cite{Engbers}. In some cases, the answer is no, and it appears that refining the Stirling numbers to an arbitrary index set $S$ destroys some essential structural properties.

For instance, even in the special case $S=\{1,2,\ldots,m\}$ the restricted Stirling numbers only satisfy the three term recurrences $$\begin{Bmatrix}n+1 \\k \end{Bmatrix}_S = k \begin{Bmatrix}n \\k \end{Bmatrix}_S+\begin{Bmatrix}n \\k-1 \end{Bmatrix}_S - \binom{n}{m}\begin{Bmatrix}n-m \\k-1 \end{Bmatrix}_S$$
and
$$\begin{bmatrix}n+1 \\k \end{bmatrix}_S = n \begin{bmatrix}n \\k \end{bmatrix}_S+\begin{bmatrix}n \\k-1 \end{bmatrix}_S - \frac{n!}{(n-m)!}\begin{bmatrix}n-m \\k-1 \end{bmatrix}_S.$$ 
This appears to be the `best possible' recurrence, in the sense that it involves a minimal number of terms. We conjecture that this implies that the generalized Stirling numbers \textit{cannot be represented as symmetric polynomials}. Following \cite[Chapter 1]{Macdonald}, when considering the polynomials ring $\mathbb{Z}[x_1,x_2,\ldots,x_n]$, the elementary symmetric function $e_k(x_1,\ldots,x_n) = \sum_{i_1<\cdots<i_k}x_{i_1}x_{i_2}\cdots x_{i_k}$ is defined through the generating product $\prod_{i=1}^n(1+tx_i) = \sum_{k=0}^\infty e_k t^k$ and the complete symmetric function $h_k(x_1,\ldots,x_n) = \sum_{i_1\leq\cdots\leq i_k}x_{i_1}x_{i_2}\cdots x_{i_k}$ is defined through the generating product $\prod_{i=1}^n(1-tx_i)^{-1} = \sum_{k=0}^\infty h_k t^k$. These are uniquely characterized by their initial values and the two term recurrences $$e_{n-j}(x_1,\ldots,x_{n-1})=e_{n-j}(x_1,\ldots,x_{n-2})+x_{n-1}e_{n-j-1}(x_1,\ldots,x_{n-2}) $$ and
$$ h_{n-j}(x_1,\ldots,x_{j})=h_{n-j}(x_1,\ldots,x_{j-1})+x_j h_{n-j-1}(x_1,\ldots,x_{j}).$$ 
Since the generalized Stirling numbers do not satisfy similar two term recurrences, they are not evaluations of elementary symmetric polynomials. Other generalizations of the Stirling number are, however, symmetric polynomial evaluations -- \cite{Mongelli} uses this technique of matching recurrence relations to establish that the \textit{Jacobi-Stirling numbers} are in fact symmetric polynomial evaluations.

This is important because it means that there is no simple product factorization of the generating function for the Stirling numbers. More concretely, the classical formulae $\prod_{i=0}^{n-1}(x+i) = \sum_{k=0}^n \begin{bmatrix}n \\k \end{bmatrix}x^k $ and $x^n = \sum_{k=0}^n \begin{Bmatrix}n \\k \end{Bmatrix}(x)(x-1)\cdots (x-k+1) $, expressing changes of bases of $\mathbb{Z}[x]$, have no general analog.

Alternatively, Stirling numbers arise as coefficients of the \textit{normal form} expansion of the differential operator $x\frac{d}{dx}$: $$\left(x\frac{d}{dx}\right)^n = \sum_{k=0}^n \begin{Bmatrix}n \\k \end{Bmatrix}x^k \frac{d^k}{dx^k}.$$ The work \cite{Bastiak} provides a comprehensive overview of work surrounding normal forms of differential operators. It remains to be seen whether there exists a modified differential operator which contains $ \begin{Bmatrix}n \\k \end{Bmatrix}$ in its normal form expansion.

Another interesting connection is to classical polynomial families. Applying Taylor's formula (where $D^n f(x):=\frac{d^n}{dx^n} f(x) \vert_{x=0}$) to our generating function gives us $$ \frac{B_{n,S}}{n!} = D^n  \exp \left( \sum_{s \in S} \frac{z^{s}}{s!}\right).$$ This yields a connection to the Hermite polynomials, which are defined as proportional to $D^n e^{-x^2}$. When $S=\{1,2\}$ (which classically yields the involution numbers), we can relate special values of $A_{n,S}=B_{n,S}$ and the Hermite polynomials. More generally, Gould-Hopper polynomials are defined in terms of nested derivatives of the exponentials of polynomials. Special values of those could probably be linked to $B_{n,S}$ in the future.

In this paper, we have only established a few basic properties for the generalized Bell, Stirling, and factorial numbers. Their general definition allows us to unite the study of many previously studied combinatorial quantities, and will hopefully continue to yield new results about the classical Bell and Stirling numbers.

\section{Acknowledgements}
I'd like to thank Christophe Vignat for everything -- I literally wouldn't be a researcher without him. He also provided significant help with the probabilistic parts of this paper, and pointed out the connection to Gould-Hopper polynomials. I'd also like to thank the hospitality of the Tulane mathematics department, and Diego Villamizar, Ira Gessel, and Jos\'e Ram\'irez for their assorted compliments and criticisms. Last but not least, here's a shoutout to Matthew Chung -- happy birthday, your gift is that you get your name in a scientific paper!

\end{document}